\declaretheorem{theorem}
\declaretheorem[sibling=theorem]{lemma}
\declaretheorem[sibling=theorem]{claim}
\declaretheorem[sibling=theorem]{corollary}
\declaretheorem[sibling=theorem,style=definition]{definition}
\DeclarePairedDelimiter{\ceil}{\lceil}{\rceil}
\DeclarePairedDelimiter{\floor}{\lfloor}{\rfloor}
\DeclareMathOperator{\pc}{pc}
\DeclareMathOperator{\col}{col}
\DeclareMathOperator{\Col}{Col}
\newcommand{\eps}{\varepsilon}
\newcommand{\NN}{\mathbf N}
\renewcommand{\S}{\mathbf S}
\newcommand{\F}{\mathcal F}
\newcommand{\K}{\mathcal K}
\newcommand{\E}{\mathcal E}
\newcommand{\kg}{\mathcal{KG}}
\begin{document}
\title{\bfseries Vertex covering with monochromatic pieces of few colours}
\author{Marlo Eugster}
\affil{\normalsize\itshape Department of Computer Science, ETH Zurich,
Switzerland}
\author{Frank Mousset\thanks{E-mail: \texttt{moussetfrank@gmail.com}\\
Research supported in part by ISF grants 1028/16
and 1147/14, and ERC Starting Grant 633509.}}
\affil{\normalsize\itshape Institute of Mathematical Sciences, Tel Aviv University, Tel Aviv, Israel}
\date{\normalsize\today}

\maketitle

\begin{abstract}
In 1995, Erd\H{o}s and Gyárfás proved that in every $2$-colouring of the edges of $K_n$, there is a vertex covering by $2\sqrt{n}$ monochromatic paths of the same colour, which is optimal up to a constant factor. The main goal of this paper is to study the natural multi-colour generalization of this problem: given two positive integers $r,s$, what is the smallest number $\pc_{r,s}(K_n)$ such that in every colouring of the edges of $K_n$ with $r$ colours, there exists a vertex covering of $K_n$ by $\pc_{r,s}(K_n)$ monochromatic paths using altogether at most $s$ different colours?

For fixed integers $r>s$ and as $n\to\infty$, we prove that $\pc_{r,s}(K_n) = \Theta(n^{1/\chi})$, where $\chi=\max{\{1,2+2s-r\}}$ is the chromatic number of the Kneser graph $\kg(r,r-s)$. More generally, if one replaces $K_n$ by an arbitrary $n$-vertex graph with fixed independence number $\alpha$, then we have $\pc_{r,s}(G) = O(n^{1/\chi})$, where this time around $\chi$ is the chromatic number of the Kneser hypergraph $\kg^{(\alpha+1)}(r,r-s)$. This result is tight in the sense that there exist graphs with independence number $\alpha$ for which $\pc_{r,s}(G) = \Omega(n^{1/\chi})$. This is in sharp contrast to the case $r=s$, where it follows from a result of Sárközy (2012) that $\pc_{r,r}(G)$ depends only on $r$ and $\alpha$, but not on the number of vertices. 

We obtain similar results for the situation where instead of using paths, one wants to cover a graph with bounded independence number by monochromatic cycles, or a complete graph by monochromatic $d$-regular graphs.
\end{abstract}

\section{Introduction}

Call a subgraph of an edge-coloured graph \emph{monochromatic} if all its edges
have the same colour. This paper is concerned with the general problem of
covering all the vertices of an edge-coloured graph by monochromatic pieces. To
be more precise, suppose that $\F$ is a fixed family of graphs, containing the
`pieces' that we can use for the covering. A \emph{monochromatic $\F$-covering} of
an edge-coloured graph $G$ is then a collection of monochromatic subgraphs of
$G$ covering all the vertices, such that every subgraph in the collection is
isomorphic to one of the graphs in $\F$. Typical choices for $\F$ include the
the collection $\F_p$ of all paths or the collection $\F_c$ of all cycles,
where it is customary to consider single vertices and edges as degenerate
cycles.
Given a graph $G$, we are interested in finding monochromatic $\F$-coverings that
are as small as possible; for example, we might want to cover $G$ using as few
monochromatic paths or cycles as possible.

This type of problem goes back to a footnote in a 1967 paper of Gerencsér and
Gyárfás \cite{Gerencser1967} in which it is shown that in every colouring of
the edges of the complete graph $K_n$ with two colours, one can find two
monochromatic paths that form a partition of (and, in particular, a covering) all
the vertices. Over the last fifty years, such problems have been studied in
many variations, including for more than two colours
\cite{EGP1991,Gya1989,Pokrovskiy2014}, for various other choices of $\F$ (most
notably for the family of cycles
\cite{All2008,BT2010,Gyarfas1983,GRSS2006,LRS1998}, but also for regular graphs
\cite{SSS2013}, bounded-degree graphs \cite{GS2016}, trees
\cite{Aha2001,CFG+2012}), and for other choices of $G$ (complete bipartite and
multipartite graphs \cite{CFG+2012,Gya1989,Hax1997,SS2014}, graphs satisfying a
minimum degree condition \cite{BBG+2014,DN2017,Let2015}, random graphs
\cite{BD2017,KMStoappear,KMN+2017}, graphs with bounded independence number
\cite{BBG+2014,Sar2011}, \dots). We note that like the Gerencsér-Gyárfás result
mentioned above, most (but not all) of these results apply to the stronger
situation where one wants to \emph{partition} the vertices of the graph into
disjoint monochromatic pieces (as opposed to just covering the vertices). For
more details we refer to the recent survey of Gyárfás \cite{Gyarfas2016}.

The specific focus of this paper is on monochromatic $\F$-coverings that
altogether \emph{do not use too many different colours}. For a collection $\S$
of monochromatic edge-coloured graphs, we denote by $\col(\S)$ the total number
of different colours used by the graphs in $\S$. Then, given a graph $G$, a
family $\F$, and positive integers $r$ and $s$, we will write $c_{r,s}(G,\F)$
for the smallest number with the property that every $r$-colouring of the edges
of $G$ admits a monochromatic $\F$-covering $\S$ such that $|\S|\leq
c_{r,s}(G,\F)$ and $\col(\S)\leq s$.

For the simplest case where $\F=\F_p$ is the collection of paths, where there
are only two colours, and where $G$ is the complete graph,
Erd\H{o}s and Gyárfás \cite{Erdos1995} proved that
\begin{equation}\label{eq:eg} \sqrt{n}\leq c_{2,1}(K_n,\F_p)\leq
2\sqrt{n}.
\footnote{The quantity
$c_{r,s}(G,\F_p)$ was denoted $\pc_{r,s}(G)$ in the abstract. Henceforth,
we will only use the more flexible notation
$c_{r,s}(G,\F_p)$.}
\end{equation}
It is open which of the two bounds (if any) is correct;
Erd\H{o}s and Gyárfás conjectured that the true value is $\sqrt{n}$.
In any case, we observe that this result is in stark contrast to the
above-mentioned result of Gerencsér and Gyárfás \cite{Gerencser1967}, which implies that
\[ c_{2,2}(K_n,\F_p) = 2,\]
which is a constant independent of $n$. One goal of this project was to see how the result
\eqref{eq:eg} generalizes to to other values of $r$ and $s$.

\subsection{Our results}

In this paper, we restrict ourselves to graphs $G$ with
independence number at most $\alpha>0$. We suppose that $r,s,\alpha$ are
constants and that the size of $G$ tends to infinity. Given $r,s,\alpha$, we
write
\[ c_{r,s,\alpha}(n,\F) = \max_{\substack{|V(G)|=n\\\alpha(G)\leq \alpha}}
c_{r,s}(G,\mathcal F).\]
Thus 
$c_{r,s,\alpha}(n,\F)$ is the minimum integer $k$ such that in every
graph $G$ with independence number at most $\alpha$ and every
$r$-colouring of the edges of $G$, there exists a monochromatic $\F$-covering $\S$ of $G$
of size at most $k$ that satisfies $\col(\S)\leq s$.

To state our results, we must first recall the notion of a Kneser hypergraph.
The Kneser hypergraph $\kg^{(\alpha+1)}(r,r-s)$ is
the $(\alpha+1)$-uniform hypergraph on the vertex set
$\binom{[r]}{r-s} = \{X\subseteq [r]\colon |X|=r-s\}$ where the vertices $X_1,\dotsc,X_{\alpha+1}\in \binom{[r]}{r-s}$ form
a hyperedge if and only if they are pairwise disjoint as subsets of $[r]$. 
A result of Alon, Frankl, and Lovász \cite{AFL1986} states that the chromatic number
of this hypergraph is
\begin{equation}\label{eq:kneserchi}
  \chi(\kg^{(\alpha+1)}(r,r-s)) = \begin{cases}
    1 & \text{if $1\leq s < \alpha r/(\alpha+1)$}\\
    1+s-r+\ceil{(s+1)\alpha^{-1}} & \text{if $\alpha r/(\alpha+1) \leq s
    < r$.}
  \end{cases}
\end{equation}
Note that the range $1\leq s<\alpha
r/(\alpha+1)$ corresponds precisely to the case where $\kg^{(\alpha+1)}(r,r-s)$
has no edges.
The case $\alpha=1$ (which corresponds here to the case
where $G=K_n$) was conjectured by Kneser in 1955 and famously established by
Lovász~\cite{Lov1978} in 1978 using topological methods.

Our first result gives a lower bound on $c_{r,s,\alpha}(n,\F)$. Note that there
are certain trivial cases where $c_{r,s,\alpha}(n,\F)$ is very small simply because the
graphs in $\F$ have many isolated vertices. To give an extreme example, if $\F$
contains for every $n\geq 0$ the graph with $n$ vertices and no edges, then
trivially $c_{r,s,\alpha}(n,\F)=1$. The easiest way to avoid such issues is to
insist that each graph in $\F$ has at most a bounded number of isolated vertices.
In addition to this, we will assume that $\F$ is \emph{$\Delta$-bounded}, that is, that every
graph in $\F$ has maximum degree at most $\Delta$. Then we prove the following
lower bound:

\begin{theorem}[Lower bound]\label{thm:lower}
  Given any positive integers $r,s,\alpha,\Delta,K$ such that $r>s$,
  there exists $c>0$ such that the following holds.
  Let $\F$ be a $\Delta$-bounded family of graphs with at most
  $K$ isolated vertices each.
  Then for every $n\in \NN$, we have
  \[ c_{r,s,\alpha}(n,\F) 
  \geq cn^{1/\chi}, \]
  where $\chi = \chi(\kg^{(\alpha+1)}(r,r-s))$.
\end{theorem}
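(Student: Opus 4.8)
The plan is to prove the lower bound by exhibiting, for each $n$, a single $n$-vertex graph $G$ with $\alpha(G)\le\alpha$ together with an $r$-colouring of its edges that is hard to cover, and then showing that every monochromatic $\F$-covering $\S$ of $G$ with $\col(\S)\le s$ satisfies $|\S|\ge cn^{1/\chi}$, where $\chi=\chi(\kg^{(\alpha+1)}(r,r-s))$. Set $N=\lfloor n^{1/\chi}\rfloor$ and identify the colour set with $[r]$. The key observation organizing the whole construction is that the forbidden colours of an admissible palette $S\in\binom{[r]}{s}$ form a set $T=[r]\setminus S\in\binom{[r]}{r-s}$, i.e.\ exactly a vertex of the Kneser hypergraph; so ``restricting to $s$ colours'' corresponds to ``fixing a Kneser vertex''.

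First I would build $G$ as a $\chi$-fold product of a single \emph{level gadget}. The level gadget is organized into parts of size $\alpha$, each labelled by a type $T\in\binom{[r]}{r-s}$, so that any $\alpha+1$ vertices meet two parts and are joined by an edge; an appropriate product that preserves independence number at most $\alpha$ then gives $\alpha(G)\le\alpha$. The bipartite edges between two parts are coloured so as to encode the disjointness relation of $\kg^{(\alpha+1)}(r,r-s)$: informally, a vertex in a part of type $T$ is reachable by a monochromatic edge of colour $c$ only when $c\notin T$. In the $\chi$-fold product a vertex $v$ thus receives a type-tuple $(T_1(v),\dots,T_\chi(v))$, and its availability of colours at level $i$ is governed by $T_i(v)$, so that a monochromatic structure in a fixed palette $S$ can be ``economical'' at level $i$ only for vertices whose type is compatible with $S$.

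The core of the argument is to turn a covering with too few pieces into a proper colouring of $\kg^{(\alpha+1)}(r,r-s)$ with fewer than $\chi$ colours. Assume $\S$ covers $G$, $\col(\S)\le s$, and $|\S|<cN$. Using that $\F$ is $\Delta$-bounded and that each piece has at most $K$ isolated vertices, I would first replace each piece by its monochromatic connected components, accounting separately for the at most $K|\S|=o(n)$ vertices that are isolated inside their pieces; these are negligible. The surviving components give, at each level $i\in[\chi]$, a labelling of the types $\binom{[r]}{r-s}$ by ``which colour/component covers a typical vertex of that type'', and the bounded number of pieces (together with the bounded degree, which controls how many vertices a component can reach economically, roughly $N^{\chi-1}$ per ``slab'') forces this labelling to use few labels. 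A pigeonhole over the $\chi$ levels then isolates a level on which the induced colouring of $\binom{[r]}{r-s}$ uses at most $\chi-1$ colours. By the Alon--Frankl--Lov\'asz computation~\eqref{eq:kneserchi}, $\kg^{(\alpha+1)}(r,r-s)$ has chromatic number exactly $\chi$, so such a colouring contains a monochromatic hyperedge: pairwise disjoint types $T_0,\dots,T_\alpha$ with a common label. The construction is set up so that this configuration produces $\alpha+1$ vertices of $G$ that are pairwise incompatible with every admissible colour yet forced to be covered economically — impossible, since an independent set has at most $\alpha$ vertices. This is exactly where the uniformity $\alpha+1$ is matched to $\alpha(G)\le\alpha$. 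Counting the whole vertex set against the $\le N^{\chi-1}$ vertices a single economical structure can reach then yields $|\S|\ge cN=c'n^{1/\chi}$.

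The main obstacle is the design and verification of the level gadget's edge-colouring: it must simultaneously (i) keep $\alpha(G)\le\alpha$ through the product, (ii) make the ``reachable in colour $c$'' relation mirror the disjointness relation of $\kg^{(\alpha+1)}(r,r-s)$ precisely enough that a monochromatic hyperedge really does obstruct an economical covering, and (iii) be robust to an arbitrary $\Delta$-bounded family $\F$. Point (iii) is the subtle one: since a single piece may be disconnected and therefore cover arbitrarily many vertices, one cannot bound coverage per piece, so the extraction must count monochromatic components (or colour--type incidences) rather than pieces, and must use the bounded maximum degree and the bound $K$ on isolated vertices to control the loss incurred in passing from pieces to components. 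Getting this reduction to produce a genuine $(\chi-1)$-colouring for \emph{every} admissible palette $S$ simultaneously is the crux of the proof.
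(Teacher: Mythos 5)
Your plan identifies the right ingredients (a levelled construction with $\chi$ levels, the Alon--Frankl--Lov\'asz lower bound on $\chi(\kg^{(\alpha+1)}(r,r-s))$, the independence-number constraint, and the need to exploit $\Delta$-boundedness and the bound $K$ on isolated vertices), but as written it has genuine gaps at its core. First, the construction is not actually defined, and the geometry you describe would fail: a ``$\chi$-fold product'' with equal-sized levels (so that each ``slab'' has about $N^{\chi-1}$ vertices) gives no mechanism confining a single monochromatic piece to one slab. The confinement in the paper comes from making the level sizes grow geometrically, $|V_i|=n^{i/\chi}$, and colouring each cross-level edge by its \emph{lower} endpoint's class: then a piece avoiding the colour of $V_{i,j}$ can meet $V_{i,j}$ only through its $\le \Delta(|V_1|+\dotsb+|V_{i-1}|)$ neighbours of lower-level vertices, which is an $n^{-1/\chi}$-fraction of $|V_{i,j}|$. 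With equal slabs, $|V_1|+\dotsb+|V_{i-1}|$ is comparable to $|V_i|$ and a single piece can swallow a whole slab. Relatedly, ``an appropriate product that preserves independence number at most $\alpha$'' is doing serious unacknowledged work; the paper instead takes each level to be a union of at most $\alpha$ cliques, or a blow-up of a Johnson graph $J(t,r-s)$ whose independence number is verified by hand.

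Second, the central reduction --- ``a covering with fewer than $cN$ pieces induces a proper $(\chi-1)$-colouring of $\kg^{(\alpha+1)}(r,r-s)$, hence a monochromatic hyperedge, hence a contradiction with $\alpha(G)\le\alpha$'' --- is asserted but not established, and I do not see how to make it work. The number of pieces is up to $cn^{1/\chi}$, which is unbounded, so ``few pieces forces few labels'' is not a pigeonhole statement; and the claimed contradiction from $\alpha+1$ pairwise disjoint types sharing a covering colour is never derived (indeed, a single colour lies in at most one of two disjoint types, so under your reachability rule already \emph{two} disjoint types with a common colour would be problematic --- a sign that the role of the uniformity $\alpha+1$ has not been correctly located). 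In the paper the $(\alpha+1)$-uniformity enters \emph{only} through the formula \eqref{eq:kneserchi} for $\chi$, which is used as a black box to know that $\chi$ is large enough for a $\chi$-level construction to exist; the covering is then defeated by a direct count, not by a recolouring argument. Finally, your uniform gadget ignores the case analysis that the arithmetic forces: when $s\ge\chi\alpha$ one cannot realize $s+1$ independent colour roles using $\le\alpha$ cliques per level and only $r$ colours, and the paper must splice a Johnson-graph blow-up into the bottom level (its Case 3); the regime $\chi=1$ also needs a separate, purely Johnson-graph construction. These are not presentational issues --- without a concrete gadget, a working confinement mechanism, and an actual derivation of the contradiction, the argument does not go through.
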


%
We remark that the conclusion of Theorem \ref{thm:lower} fails when $r=s$;
indeed, there are many situations where $c_{r,r,\alpha}(n,\F)$ is known to be constant.
For example, Gyárfás, Ruszinkó, Sárközy, and Szemerédi \cite{GRSS2006} proved that
$c_{r,r,1}(n,\F_c) \leq 100r\log r$.
Sárközy \cite{Sar2011}
proved that $c_{r,r,\alpha}(n,\F_c)\leq 25(\alpha r)^2\log (\alpha r)$.
Sárközy, Selkow, and Song \cite{SSS2013} proved 
that if $\F$ contains the graph on a single vertex and all connected
$d$-regular graphs, then $c_{r,r,1}(n,\F)\leq 100r\log r+2rd$. 
For more general families, Grinshpun and
Sárközy \cite{GS2016} showed that if $\F$ is $\Delta$-bounded and contains at least
one graph on $i$ vertices for every $i\geq 1$, then $c_{2,2,1}(n,\F) \leq
2^{O(\Delta\log \Delta)}$.

We also prove an upper bound that matches the lower bound given by Theorem
\ref{thm:lower} in many cases. Note again that it is possible to choose $\F$
so that $c_{r,s,\alpha}(n,\F)$ is trivially very large; for example, if $\F$
only contains a single fixed graph then it is obvious that $c_{r,s,\alpha}(n,\F)
= \Omega(n)$. Our way to avoid this kind of problem will be to assume that there is some
$\eps>0$ such that for every $i\geq 1$, $\F$ contains
at least one graph $F$ with $|V(F)|\in [\eps i,i]$. In fact,
our proof (but perhaps not the result) requires the stronger assumption that at least one such graph is \emph{bipartite}.
We prove:

\begin{theorem}[Upper bound]\label{thm:upper}
  Given any positive integers $r,s,\alpha,\Delta$ such that $r>s$, and any $\eps>0$,
  there exists $C>0$ such that the following holds.
  Let $\F$ be a $\Delta$-bounded family $\F$ of graphs such that for every $i\geq
  1$, there is a bipartite $F\in \F$ with $\eps i\leq |V(F)| \leq i$.
  Then for every $n\in \NN$, we have
  \[ c_{r,s,\alpha}(n,\F) \leq Cn^{1/\chi}+c_{r,r,\alpha}(n,\mathcal F), \]
  where $\chi = \chi(\kg^{(\alpha+1)}(r,r-s))$.
\end{theorem}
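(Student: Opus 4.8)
The target bound is $c_{r,s,\alpha}(n,\F)\leq Cn^{1/\chi}+c_{r,r,\alpha}(n,\F)$, so the natural strategy is to split the work into two phases: first use few colours but cover most of the graph with $O(n^{1/\chi})$ pieces, then mop up the remaining vertices using the $s=r$ covering which costs $c_{r,r,\alpha}(n,\F)$ pieces but buys us a covering of a \emph{complete} colour palette on a small leftover set. Let me explain the plan for the first phase, which I expect to carry the main content.

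The plan is to exploit the meaning of $\chi=\chi(\kg^{(\alpha+1)}(r,r-s))$ as a \emph{covering} parameter. Recall that the vertices of $\kg^{(\alpha+1)}(r,r-s)$ are the $(r-s)$-subsets of $[r]$, and a proper colouring with $\chi$ colours is an assignment of one of $\chi$ labels to each such subset so that no $\alpha+1$ pairwise disjoint $(r-s)$-sets all receive the same label. Equivalently, the colour classes partition $\binom{[r]}{r-s}$ into $\chi$ families, each of which contains no $\alpha+1$ pairwise disjoint sets; by duality each colour class, viewed via complementation as a family of $s$-subsets, has the property that any $\alpha+1$ of its members have a common element (an ``$(\alpha+1)$-wise intersecting'' structure). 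Fix such an optimal colouring once and for all. Now given an $r$-edge-colouring of $G$ and any vertex $v$, I would look at the set $N_i(v)$ of neighbours reached by each colour $i\in[r]$; since $\alpha(G)\leq\alpha$ and $r>s$, a vertex cannot ``avoid'' too many colours at once, and the combinatorial role of $\chi$ is exactly to guarantee that for one of the $\chi$ label classes, many vertices share an $s$-subset of colours spanning almost all their neighbourhoods. The heart of the argument is a density/pigeonhole step: some label class $L$ and some $s$-set of colours captures a positive fraction of the vertices in a structured way, and within that structure I can find a large bipartite-like configuration using only those $s$ colours.

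The key steps, in order, are as follows. (1) Reduce to finding, using only $s$ colours, a single monochromatic bipartite piece covering $\Omega(n^{1-1/\chi})$ vertices; then iterate, removing covered vertices and reapplying, so that after $O(n^{1/\chi})$ pieces the uncovered set has size $o(n)$ or small enough to hand off. (2) To produce one such large piece, use the hypothesis that $\F$ contains, for every $i$, a bipartite graph $F$ with $\eps i\leq|V(F)|\leq i$ and $\Delta(F)\leq\Delta$: this lets me realize any sufficiently large balanced bipartite pattern between two vertex sets joined densely in a single colour, by greedily embedding the bounded-degree bipartite $F$ into a dense monochromatic bipartite graph. (3) Establish that such a dense monochromatic bipartite graph on $\Omega(n^{1-1/\chi})$ vertices, using a colour from the favourable label class $L$, must exist; this is where the Kneser-colouring/intersection property and the bound $\alpha(G)\leq\alpha$ combine, via an averaging argument over the $\binom{[r]}{s}$ possible $s$-subsets and the label classes. (4) Finally, once the iterative procedure leaves a residual set $R$ small enough that $\F$-covering it with the full $r$ colours costs only $c_{r,r,\alpha}(|R|,\F)\leq c_{r,r,\alpha}(n,\F)$ pieces, apply the $r$-colour result on $G[R]$ and add those pieces; the colour budget is not violated because we account for the colours separately, matching the additive form of the stated bound.

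The main obstacle I anticipate is step (3): showing that the Kneser chromatic number $\chi$ is not merely an abstract invariant but translates into a concrete guarantee of a \emph{dense monochromatic bipartite subgraph on the right number of vertices using at most $s$ colours}. The difficulty is quantitative calibration — the exponent $1/\chi$ must emerge exactly, which suggests setting up the iteration so that each of the $\chi$ ``layers'' of the Kneser colouring contributes one factor of $n^{1/\chi}$, and that the threshold $\Omega(n^{1-1/\chi})$ for a single piece is precisely what makes $O(n^{1/\chi})$ pieces suffice. I would expect to need a careful inductive or layered argument on the structure of the colouring of $\kg^{(\alpha+1)}(r,r-s)$, possibly partitioning vertices of $G$ by which colours are ``locally dense'' at them and using the $(\alpha+1)$-wise intersection property of each label class to force overlap (and hence a common colour) across any $\alpha+1$ candidate parts, with $\alpha(G)\leq\alpha$ preventing an independent set large enough to dodge this. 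The embedding in step (2) and the handoff in step (4) should be comparatively routine once the density in step (3) is in hand.
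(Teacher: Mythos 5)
Your plan has a genuine gap, and it is located exactly where you place the additive term $c_{r,r,\alpha}(n,\F)$. In step (4) you propose to cover the residual set $R$ by an unrestricted $r$-colour $\F$-covering of $G[R]$ and to ``account for the colours separately.'' But the definition of $c_{r,s}$ requires the \emph{entire} covering $\S$ to satisfy $\col(\S)\leq s$; there is no separate budget for a leftover set. Since $r>s$, adding an $r$-colour covering of $R$ on top of your phase-1 pieces can produce a covering using up to $r$ distinct colours, which is not allowed. The paper uses the $r$-colour covering in the opposite way: it \emph{starts} from a covering $\S_0$ of size $c_{r,r}(G,\F)$ (this is where the additive term comes from), and then \emph{augments} it with $O(n^{1/\chi})$ further monochromatic pieces until some $(r-s)$-set $X$ of colours becomes redundant, meaning every vertex covered by a graph coloured in $X$ is also covered by a graph coloured outside $X$. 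At that point all graphs with colours in $X$ are deleted, and what remains is a covering with at most $s$ colours. Your mop-up step cannot be repaired by shrinking $R$, because the $c_{r,r,\alpha}$ term in the bound is then unaccounted for; the additive structure of the bound genuinely reflects the ``start from a full covering and prune a colour class at the end'' architecture, not a two-phase cover-then-patch.

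There is a second, related gap in step (3). You want to fix, in advance, a label class of the Kneser colouring and an $s$-set of colours that ``captures a positive fraction of the vertices,'' and then extract from it a dense monochromatic bipartite graph on $\Omega(n^{1-1/\chi})$ vertices, so that $O(n^{1/\chi})$ pieces of that size finish the job. No such non-adaptive guarantee is available, and the lower-bound constructions in the paper (layered blow-ups with parts of sizes $n^{i/\chi}$ for $i=1,\dotsc,\chi$) show why: the useful pieces at different stages have very different sizes, and which colours are dispensable cannot be decided up front. The paper's proof is adaptive and amortized: it tracks the potential $\delta(\S)=\sum_{X}\log|V_{\S,X}|$, where $V_{\S,X}$ is the set of vertices covered only by colours in $X$; the pigeonhole over the $\chi$ values of $\lfloor\log_b|V_{\S,X}|\rfloor$ produces a hyperedge $\{X_1,\dotsc,X_{\alpha+1}\}$ of $\kg^{(\alpha+1)}(r,r-s)$ whose sets have comparable sizes, the independence-number condition forces a dense bipartite graph between two of the corresponding vertex sets, and disjointness of $X_i$ and $X_j$ guarantees the majority colour lies outside one of them. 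Each round may add many pieces, each covering far fewer than $n^{1-1/\chi}$ vertices, but the potential drops by $\Omega(t\,n^{-1/\chi}\log n)$ when $t$ pieces are added, which is what yields the $O(n^{1/\chi})$ total. Your steps (1)--(2), the regularity-type embedding of a bounded-degree bipartite member of $\F$ into a dense monochromatic bipartite graph, do match the paper's Lemma on embedding; but without the potential-function bookkeeping and without the correct role of the initial $r$-colour covering, the argument as proposed does not go through.
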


This upper bound coincides asymptotically with the lower bound given by
Theorem \ref{thm:lower} whenever we know that $c_{r,r,\alpha}(n,\mathcal F) =
O(n^{1/\chi})$. As mentioned above, in many situations it is even known
that $c_{r,r,\alpha}(n,\mathcal F) = O(1)$. We can thus obtain asymptotically
tight results in several different cases. From the above-mentioned result of
Sárközy \cite{Sar2011} we immediately obtain:

\begin{corollary}[Paths and cycles]\label{cor:1}
  Let $r,s,\alpha$ be fixed positive integers such that $r>s$.
  Let $\chi=\chi(\kg^{(\alpha+1)}(r,r-s))$.
  Let $\F_p$ be the family of all paths and $\F_c$ be the family of all cycles.
  Then 
  \[ \Omega(n^{1/\chi})\leq c_{r,s,\alpha}(n,\F_p) \leq c_{r,s,\alpha}(n,\F_c)\leq O(n^{1/\chi}). \]
\end{corollary}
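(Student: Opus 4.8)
The plan is to deduce Corollary~\ref{cor:1} directly from Theorems~\ref{thm:lower} and~\ref{thm:upper}, the two inequalities $\F_p\subseteq\F_c$ (up to identifying a path with a path plus the degenerate cycle convention), and the stated result of Sárközy that $c_{r,r,\alpha}(n,\F_c)=O(1)$. Since all three middle quantities are sandwiched, the entire work reduces to verifying that the hypotheses of the two theorems are satisfied by $\F_p$ and $\F_c$, and that the family $\F$ in the upper bound can be taken to be $\F_c$.

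First I would establish the left-hand inequality $\Omega(n^{1/\chi})\le c_{r,s,\alpha}(n,\F_p)$ by applying Theorem~\ref{thm:lower} to $\F=\F_p$. This requires checking that $\F_p$ is $\Delta$-bounded (it is, with $\Delta=2$, since every path has maximum degree at most $2$) and that each path has a bounded number of isolated vertices (it has zero, so $K=0$ works, or $K=1$ to be safe). With $r>s$ as assumed, Theorem~\ref{thm:lower} yields $c_{r,s,\alpha}(n,\F_p)\ge cn^{1/\chi}$ for the appropriate constant $c>0$, which is exactly $\Omega(n^{1/\chi})$.

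Next I would establish the middle inequality $c_{r,s,\alpha}(n,\F_p)\le c_{r,s,\alpha}(n,\F_c)$. This is purely monotonicity in $\F$: since every path is (under the convention that single vertices and edges count as degenerate cycles) realizable as a member of $\F_c$, or more cleanly since $\F_p\subseteq\F_c$ once degenerate cycles are included, any monochromatic $\F_c$-covering with the right colour budget is in particular available whenever a path-covering is, so enlarging the family of allowed pieces can only decrease the covering number. I would state this as a short monotonicity remark; the only subtlety is the degenerate-cycle convention, which I would invoke explicitly to ensure $\F_p\subseteq\F_c$ holds as stated.

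Finally, for the right-hand inequality $c_{r,s,\alpha}(n,\F_c)\le O(n^{1/\chi})$ I would apply Theorem~\ref{thm:upper} with $\F=\F_c$. The main thing to check is the structural hypothesis: for every $i\ge 1$ there must be a \emph{bipartite} $F\in\F_c$ with $\eps i\le|V(F)|\le i$. Here I would take each even cycle $C_{2m}$ (which is bipartite) together with the degenerate cycles on one and two vertices; since even cycles of every length exist, one can find a bipartite cycle of essentially any prescribed even order, so $\eps=1/2$ (say) works comfortably. Combined with $\Delta$-boundedness ($\Delta=2$ again), Theorem~\ref{thm:upper} gives $c_{r,s,\alpha}(n,\F_c)\le Cn^{1/\chi}+c_{r,r,\alpha}(n,\F_c)$. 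Invoking Sárközy's bound $c_{r,r,\alpha}(n,\F_c)\le 25(\alpha r)^2\log(\alpha r)=O(1)$ absorbs the second term into the first, yielding $O(n^{1/\chi})$. The only genuine obstacle—and it is minor—is making sure the bipartite-cycle requirement and the degenerate-cycle convention are handled cleanly so that $\F_c$ legitimately satisfies the hypotheses of both theorems; everything else is a direct substitution.
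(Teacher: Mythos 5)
Your overall strategy matches the paper's: the corollary is indeed a direct combination of Theorem~\ref{thm:lower} (applied to $\F_p$ with $\Delta=2$ and $K=1$), Theorem~\ref{thm:upper} (applied to $\F_c$, using even cycles together with the degenerate one- and two-vertex cycles to meet the bipartite-piece hypothesis with, say, $\eps=1/2$), and Sárközy's bound $c_{r,r,\alpha}(n,\F_c)=O(1)$. Those two endpoints of the chain are handled correctly.

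The middle inequality, however, is justified incorrectly. First, the inclusion $\F_p\subseteq\F_c$ is false: the degenerate-cycle convention only places single vertices and single edges into $\F_c$, so a path on three or more vertices is not a member of $\F_c$. Second, and more seriously, even if the inclusion held, monotonicity in $\F$ runs the other way: enlarging the family of allowed pieces can only make coverings easier to find, so $\F_p\subseteq\F_c$ would yield $c_{r,s,\alpha}(n,\F_c)\leq c_{r,s,\alpha}(n,\F_p)$, the reverse of what you need. The correct (and standard) argument for $c_{r,s,\alpha}(n,\F_p)\leq c_{r,s,\alpha}(n,\F_c)$ is that every cycle contains a spanning path: given a monochromatic $\F_c$-covering $\S$ with $\col(\S)\leq s$, delete one edge from each non-degenerate cycle to obtain a monochromatic $\F_p$-covering of the same size, covering the same vertices and using the same colours. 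With that one-line replacement your proof is complete and coincides with the paper's.
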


In particular, setting $\alpha = 1$ and using \eqref{eq:kneserchi} gives
\[ c_{r,s}(K_n,\F_p)= \Theta(n^{1/\max{\{1,2+2s-r\}}}) \]
thus generalizing the Erd\H{o}s-Gyárfás result \eqref{eq:eg} to more colours (and the same
holds for $\F_c$ instead of $\F_p$).

Similarly, using the result of Sárközy, Selkow, and Song \cite{SSS2013}, we get
the following result for covering complete graphs by regular graphs:

\begin{corollary}[$d$-regular graphs]\label{cor:2}
  Let $r,s,d$ be fixed positive integers such that $r>s$.
  Let $\chi=\chi(\kg^{(2)}(r,r-s)) = \max{\{1,2+2s-r\}}$.
  Let $\F_d$ be the family containing all connected $d$-regular graphs and also the graph
  with a single vertex and no edges.
  Then
  \[ c_{r,s}(K_n,\F_d) = \Theta(n^{1/\chi}). \]
\end{corollary}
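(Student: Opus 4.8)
The plan is to sandwich $c_{r,s}(K_n,\F_d)$ between Theorem~\ref{thm:lower} and Theorem~\ref{thm:upper}, both applied with $\alpha=1$. The key preliminary observation is that the only $n$-vertex graph with independence number at most $1$ is $K_n$ itself, so by the definition of $c_{r,s,\alpha}(n,\cdot)$ we have $c_{r,s,1}(n,\F_d)=c_{r,s}(K_n,\F_d)$; thus both theorems, which bound $c_{r,s,\alpha}(n,\cdot)$, apply directly to the quantity of interest. Moreover, for $\alpha=1$ the relevant Kneser object is the graph $\kg^{(2)}(r,r-s)$, and \eqref{eq:kneserchi} gives $\chi=\max\{1,2+2s-r\}$, matching the exponent stated in the corollary. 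It then remains to check that $\F_d$ satisfies the hypotheses of each theorem.

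For the lower bound, I would verify that $\F_d$ is $\Delta$-bounded with $\Delta=d$ (every connected $d$-regular graph has maximum degree $d$, and the single vertex has degree $0$) and that each member has at most $K=1$ isolated vertex (a connected $d$-regular graph with $d\ge 1$ has none, while the single vertex contributes exactly one). Theorem~\ref{thm:lower} then yields $c_{r,s}(K_n,\F_d)=c_{r,s,1}(n,\F_d)\ge c\,n^{1/\chi}=\Omega(n^{1/\chi})$.

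For the upper bound, the one nonroutine step is to confirm the size-density hypothesis of Theorem~\ref{thm:upper}: for some fixed $\eps>0$ and every $i\ge1$, $\F_d$ must contain a bipartite graph on between $\eps i$ and $i$ vertices. To supply these I would use an explicit family of connected $d$-regular bipartite graphs of all sufficiently large even orders. For each $m\ge d$, take the bipartite graph with parts $\{u_0,\dots,u_{m-1}\}$ and $\{v_0,\dots,v_{m-1}\}$ in which $u_i$ is joined to $v_i,v_{i+1},\dots,v_{i+d-1}$ (indices modulo $m$); this is $d$-regular, bipartite, and---for $d\ge2$---connected, since $u_i\,v_{i+1}\,u_{i+1}$ is a path linking consecutive $u_i$. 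This realizes every even order $2m\ge 2d$. Given $i$, choosing the graph of order $2\floor{i/2}$ when $i\ge 2d$ and the single vertex when $i<2d$ verifies the hypothesis with, say, $\eps=1/(2d)$. Theorem~\ref{thm:upper} then gives $c_{r,s}(K_n,\F_d)\le C n^{1/\chi}+c_{r,r,1}(n,\F_d)$, and the Sárközy--Selkow--Song bound $c_{r,r,1}(n,\F_d)\le 100r\log r+2rd=O(1)$ collapses the second term, leaving $O(n^{1/\chi})$ and completing the proof.

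The main point requiring care---and the genuine obstacle hidden in the argument---is the construction above: it is exactly where connectivity of the pieces is delicate and where the case $d=1$ breaks down. For $d=1$ the only connected $d$-regular graph is $K_2$, so $\F_1$ is finite, every piece covers at most two vertices, and $c_{r,s}(K_n,\F_1)=\Omega(n)$; the size-density hypothesis of Theorem~\ref{thm:upper} then fails and the upper bound cannot hold once $\chi\ge2$. I would therefore either restrict attention to $d\ge2$ or flag this degenerate case explicitly; for all $d\ge2$ the circulant construction makes the argument go through cleanly.
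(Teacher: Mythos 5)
Your proposal is correct and follows the same route as the paper, which derives the corollary from Theorems~\ref{thm:lower} and \ref{thm:upper} together with the Sárközy--Selkow--Song bound on $c_{r,r,1}(n,\F_d)$; your explicit circulant construction of connected $d$-regular bipartite graphs of every even order $\geq 2d$ is exactly the kind of verification of the hypothesis of Theorem~\ref{thm:upper} that the paper leaves implicit. Your observation about $d=1$ is a genuine (if minor) point: since the only connected $1$-regular graph is $K_2$, every $\F_1$-covering of $K_n$ has size at least $n/2$, so the stated $\Theta(n^{1/\chi})$ fails whenever $r\leq 2s$ (e.g.\ $r=2$, $s=1$ gives $\chi=2$), and the corollary should indeed be read with $d\geq 2$.
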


Note that the bounds in Corollaries~\ref{cor:1} and \ref{cor:2}
are only tight up to a large multiplicative
factor depending on $r$, $s$, and $\alpha$ (resp.\ $d$). It would be interesting to
determine these factors more precisely.
As mentioned earlier, even the case where $r=2$ and $s=\alpha=1$ is still open.

It is perhaps interesting to note that the proof of Theorem \ref{thm:upper} does not
actually use the Alon-Frankl-Lovász result \eqref{eq:kneserchi}, but rather
works directly with the definition of $\chi$ as the chromatic number of
$\kg^{(\alpha+1)}(r,r-s)$. On the other hand, our proof of 
Theorem~\ref{thm:lower}
really uses the value of $\chi$ given by \eqref{eq:kneserchi}, or, more precisely,
it uses the lower bound on $\chi$ implied by \eqref{eq:kneserchi},
which is by far the more difficult direction.

\subsection{Notation}

We write $[k]=\{1,\dotsc,k\}$. We write $\binom{A}{\ell}$ for the set of all
$\ell$-element subsets of the set $A$. 
If $G$ is a graph and $V_i,V_j$ are disjoint subsets of the vertices of $G$,
then we denote by $G[V_i,V_j]$ the bipartite subgraph induced by the two parts
$V_i$ and $V_j$, and we write $e_G(V_i,V_j)$ for the number of edges of $G[V_i,V_j]$.

Since we are aiming for asymptotic statements, we routinely omit rounding
brackets whenever they are not essential.

\section{Proof of Theorem~\ref{thm:lower}}

Suppose that we are given positive integers $r,s,\alpha,\Delta,K$ such that $r>s$.
Let $\chi$ denote the chromatic number of $\kg^{(\alpha+1)}(r,r-s)$. 
We need to show
that there is a constant $c=c(r,s,\alpha,\Delta,K)>0$ such that
\[ c_{r,s,\alpha}(n,\F) \geq c n^{1/\chi}  \]
for all $n\in \NN$ and all $\Delta$-bounded families $\F$ of graphs with at most $K$
isolated vertices each.
In other words, we need to construct an $r$-coloured graph $G$ with
independence number at most $\alpha$ such that every monochromatic $\F$-covering $\S$
of $G$ with $\col(\S)\leq s$ has size at least $cn^{1/\chi}$.

The construction will use \emph{Johnson graphs}. The Johnson graph $J(a,b)$ is
the graph with the vertex set $\binom{[a]}{b}$ where two vertices $X$ and $Y$ are
joined by an edge if they have a non-empty intersection (so it is the
complement of the Kneser graph $\kg^{(2)}(a,b)$). Is is easy to see that the
independence number of $J(a,b)$ is at most $\floor{a/b}$: every collection of
$\floor{a/b}+1$ sets in $\binom{[a]}{b}$ covers in total $(\floor{a/b}+1)b>a$
elements, counted with multiplicities, so that at least two of the sets must
intersect. 

To prove Theorem~\ref{thm:lower}, we use
different constructions depending on the parameters. We distinguish between
three cases.

\paragraph{Case 1.} Suppose first that $1\leq s< \alpha r/(\alpha+1)$, i.e.,
that $\chi=1$ by \eqref{eq:kneserchi}. Let $G$ be a blow-up of $J(r,r-s)$ where every vertex is
replaced by a clique on $n/\binom{r}{r-s}$ vertices and where every edge is
replaced by a complete bipartite graph between the corresponding cliques. For a
vertex $X$ of $J(r,r-s)$, we write $V_X$ for the vertices of $G$ in the clique
corresponding to $X$.

Note that $G$ has the same independence number as $J(r,r-s)$, which
is at most $\floor{r/(r-s)}$. The assumption
$\alpha r/(\alpha+1)>s$ implies that
\[ \frac{r}{r-s}< \frac{r}{r-\alpha r/(\alpha+1)}= \alpha+1 \]
and so the independence number of $G$ is at most $\alpha$,
as required.

We now colour the edges of $G$ with colours from $[r]$ as follows. Let $uv$
be an edge of $G$. Then there exist vertices $X$ and $Y$ of $J(r,r-s)$ such
that $u\in V_X$ and $v\in V_Y$. Moreover, we either have $X=Y$, or $\{X,Y\}$ is an
edge in $J(r,r-s)$, and in both cases, $X\cap Y\neq \emptyset$. We then
colour $uv$ with any colour belonging to the set $X\cap Y\subseteq [r]$.

Finally, suppose that $\S$ is a monochromatic $\F$-covering of $G$ such that
$\col(\S)\leq s$.
Then there is some $X\subseteq
[r]$ of size $r-s$ that is disjoint from the set of colors used by
the graphs in $\S$. By our choice
of colouring, all edges touching $V_X$ have a colour in $X$, so the vertices in
$V_X$ can only be covered using isolated vertices. Since every graph in $\S$
has at most $K$ isolated vertices, this means that
$|\S| \geq |V_X|/K \geq n/(K\binom{r}{r-s})$, completing the proof in this case
(since $\chi=1$).

\paragraph{Case 2.} Suppose now that $s\geq \alpha r/(\alpha+1)$ and assume
additionally that $s< \chi \alpha$. Then by \eqref{eq:kneserchi}, we have
$\chi = 1+s-r+\ceil{(s+1)\alpha^{-1}} \leq r$, where the last inequality
follows from $s+1\leq r$. Since additionally $s+1\leq \chi\alpha$,
we can fix integers $1\leq k_1,\dotsc,k_\chi\leq \alpha$ such that $k:=
k_1+\dotsb+k_\chi \in \{s+1,\dotsc,r\}$.

We now construct an $n$-vertex graph $G$ as follows. We start with a blow-up of
the complete graph $K_\chi$ where the $i$-th vertex is replaced by a set
$V_i$ of $n^{i/\chi}$ vertices, except for the $\chi$-th vertex, which is
replaced by a set set $V_\chi$ of \[ |V_\chi| = n - n^{1/\chi} - n^{2/\chi} -
\dotsb - n^{(\chi-1)/\chi} \geq n-o(n)\] vertices. Each edge $ij$ of $K_\chi$
is replaced by a complete bipartite graph between the corresponding sets
$V_i$ and $V_j$. We further partition each set $V_i$ equitably into $k_i$
parts $V_{i,1},\dotsc,V_{i,k_i}$, and insert all edges where both
endpoints are contained in the same set $V_{i,j}$. Thus for each $i$, the
graph $G[V_i]$ is the disjoint union of $k_i$ cliques of size
$|V_i|/k_i$. This defines the graph $G$. It is easy to see that $G$ has
independence number $\max{\{k_i: 1\leq i\leq \chi\}}\leq\alpha$.

Next, we colour the edges of $G$ as follows. First, we fix an arbitrary
bijection
\[ \phi\colon \{(i,j): 1\leq i\leq \chi \text{ and }1\leq
j\leq k_i\}\to [k].\]
Such a bijection exists because $k_1+\dotsb+k_\chi=k$.
Then we distinguish two cases. If $uv$ is an edge of $G$ with both endpoints
in the same set $V_{i,j}$, then $uv$ receives the colour $\phi(i,j)$. On the
other hand, if $uv$ goes between the sets $V_{i,j}$ and $V_{i',j'}$ where
$i<i'$, then we $uv$ receives the colour $\phi(i,j)$. Note that by construction,
there are no edges going between to sets $V_{i,j}$ and $V_{i,j'}$ for $j\neq j'$.
Since $k\leq r$, this is a colouring with at most $r$ colours.

Now suppose that $\S$ is a monochromatic $\F$-covering of $G$ such that
$\col(\S)\leq s$. Since $s<k$, there is then some pair $(i,j)$ with
$1\leq i\leq \chi$ and $1\leq j\leq k_i$ such that $\phi(i,j)$ is not
the colour of any graph in $\S$.
Now observe that the only edges incident to $V_{i,j}$
that do not use the colour $\phi(i,j)$ are those that have an endpoint
in $V_1\cup\dotsb\cup V_{i-1}$.
In particular, every graph in $\S$, having maximum degree at most $\Delta$
and at most $K$ isolated vertices, can cover at most
$\Delta(|V_1|+\dotsb+|V_{i-1}|)+K$ vertices of $V_{i,j}$.
Now $|V_{i,j}| \geq n^{i/\chi}/r$ implies
\[ \begin{split}
  \Delta(|V_1|+\dotsb+|V_{i-1}|)+K & = \Delta(n^{1/\chi} + \dotsb + n^{(i-1)/\chi})+K\\
  & \leq (1+o(1))\cdot (\Delta+K)\cdot n^{(i-1)/\chi}\\
& \leq (1+o(1)) \cdot r(\Delta +K) \cdot n^{-1/\chi} |V_{i,j}|, \end{split}\]
and so to cover $V_{i,j}$ completely, $\S$ must contain
at least
$(1-o(1))n^{1/\chi}/(r(\Delta+K))$ graphs, completing the proof in this case.

\paragraph{Case 3.} Finally, assume $s\geq \alpha r/(\alpha+1)$ and $s\geq \chi
\alpha$.
The construction in this case is a combination of the constructions
used in the two previous cases. We will construct a graph $G$ on $n$ vertices
as follows.
As in Case 2, we start with a blow-up of
the complete graph $K_\chi$ where the $i$-th vertex is replaced by a set $V_i$
of $|V_i|=n^{i/\chi}$ vertices, except for the last vertex, which is replaced
by a set $V_\chi$ of \[ |V_\chi| = n - n^{1/\chi} - n^{2/\chi} - \dotsb -
n^{(\chi-1)/\chi} \geq n-o(n)\] vertices. Each edge $ij$ of $K_\chi$ is
replaced by a complete bipartite graph between the corresponding sets $V_i$ and
$V_j$. This defines the edges going between different sets $V_i$ and $V_j$.

Next, we specify what each graph $G[V_i]$ looks like. For $G[V_1]$, we use a
similar construction as in Case 1. Let
$t:= r-\alpha(\chi-1)$ and note that since $s\geq \chi \alpha >
\alpha(\chi-1)$, we have $t> r-s$. We let $G[V_1]$ be a blow-up of the
Johnson graph $J(t,r-s)$ where
every vertex is replaced by a clique on $|V_1|/\binom{t}{r-s}$ vertices, and
where every edge is replaced by a complete bipartite graph between the
corresponding cliques. For later reference, we define $V_{1,X}\subseteq V_1$
to be the vertex set of the clique corresponding to the vertex $X$ of
$J(t,r-s)$. For $1< i \leq \chi$, we let $G[V_i]$ be the union of $\alpha$
vertex-disjoint cliques of size $|V_i|/\alpha$, somewhat similarly as in Case 2.
We will write
$V_{i,1},\dotsc,V_{i,\alpha}\subseteq V_i$ for the vertex sets of these
cliques. This completes the definition of $G$.

We first check that $G$ really has independence number at most $\alpha$.
It is immediate from the construction that $\alpha(G) = \max{\{\alpha(G[V_i]):
1\leq i \leq \chi\}}$. Moreover, it is easy to see that for $i>1$, we have
$\alpha(G[V_i]) = \alpha$. So it remains only to consider $i=1$.
Observe that $G[V_1]$ has the same independence number as $J(t,r-s)$, which
is at most $\floor{t/(r-s)}$. It is thus sufficient to prove that
$t/(r-s) < \alpha+1$, which is easily seen to be true using the definition
of $\chi$. Indeed, since $t = r-\alpha(\chi-1)$, the inequality
$t < (\alpha+1)(r-s)$ is equivalent to
\[ s<\alpha(r-s+\chi-1), \]
which is true because $r-s+\chi-1 = \ceil{(s+1)/\alpha}$ using
\eqref{eq:kneserchi} and the assumption $s\geq \alpha r/(\alpha+1)$.
Hence we have $\alpha(G)\leq \alpha$, as required.

We now define a colouring of the edges of $G$ with $r$ colours, where we
distinguish several cases. First, suppose that $uv$ is an edge with $u,v\in
V_1$. Then there exist vertices $X,Y$ of $J(t,r-s)$ such
that $u\in V_{1,X}$ and $v\in V_{1,Y}$; moreover, for these $X,Y$ it holds
that $X\cap Y \neq \emptyset$ (they are either identical or represent an edge
in $J(t,r-s)$). We then
colour $uv$ with any colour in $X\cap Y$. Second, assume that $uv$ has
exactly one endpoint (say, $u$) in $V_1$ and the other in $V_i$ for some
$i>1$. Then there is some vertex $X$ of $J(t,r-s)$ such
that $u\in V_{1,X}$, and we colour $uv$ with any colour in $X$.
Lastly, to colour the remaining edges, fix any bijection
\[ \phi\colon
\{(i,j) : 1< i \leq \chi\text{ and } 1\leq j\leq \alpha\}
\to [r]\setminus[t]. \]
Such a bijection exists because $r-t = \alpha(\chi-1)$.
If $uv$ is an edge with both endpoints in the same set $V_i$
for $i>1$, say $u,v\in V_{i,j}$, then we colour $uv$ with the colour
$\phi(i,j)$ (note that there are no edges between $V_{i,j}$ and
$V_{i,j'}$ for $j\neq j'$). If $uv$ is an edge going between $u\in V_{i,j}$
and $v\in V_{i',j'}$ where $i<i'$, then we colour $uv$ with the colour
$\phi(i,j)$. Thus we have coloured all the edges.

We make two observations at this point:
\begin{enumerate}[(i)]
 \item Every edge incident to $V_{1,X}$ is coloured with a colour from $X$,
   for every vertex $X$ of $J(t,r-s)$;
 \item For every $1<i\leq \chi$ and $1\leq j\leq \alpha$, the only edges
   incident to $V_{i,j}$ that do not use the colour $\phi(i,j)$ are those
   that are incident to a set $V_{i'}$
   where $i'<i$. In particular, every monochromatic copy of a graph $F\in \F$ 
   that uses a colour different from $\phi(i,j)$ can cover at most
   \[ \begin{split} \Delta (|V_1|+\dotsb+|V_{i-1}|)+K
     &\leq \Delta(n^{1/\chi} + \dotsb + n^{(i-1)/\chi})+K\\
     &\leq (1+o(1))\cdot (\Delta+K)\cdot n^{(i-1)/\chi}\\
     & \leq (1+o(1))\cdot \alpha (\Delta+K)\cdot n^{-1/\chi}
   |V_{i,j}|\end{split}\]
   vertices of $V_{i,j}$, where we use that $F$ has maximum degree at most $\Delta$
   and at most $K$ isolated vertices.
\end{enumerate}

To complete the proof, suppose that $\S$ is a monochromatic
$\F$-covering of $G$ such that $\col(\S)\leq s$. Denoting by
$\Col(\S)$ the set of all colours used by graphs in $\S$,
we distinguish two possible cases.

The first case is when $\Col(\S)$ contains at most $t-(r-s)$ colours
from $[t]$. In this case, there is some set $X$
of $r-s$ colours in $[t]$ that do not
belong to $\Col(\S)$. But then, as all edges incident to $V_{1,X}$
use a colour from $X$ (see (i)), the only way in which $\S$ can cover the
vertices in $V_{1,X}$ is by using isolated vertices. Since each graph in 
$\S$ has at most $K$ isolated vertices, this implies
$|\S|\geq |V_{1,X}|/K \geq 
n^{1/\chi}/K$, completing the proof in this case.

In the other case, $\Col(\S)$ contains at least $t-(r-s)+1$
colours from $[t]$. Since $\col(\S)\leq s$,
this means that at most $s-t+(r-s)-1=r-t-1$ colours from $\Col(\S)$
can be contained in $[r]\setminus [t]$. In particular, there is a colour
$a\in [r]\setminus [t]$ that is not used by any of the graphs in $\S$.
Let $(i,j)= \phi^{-1}(a)$ and consider the set
$V_{i,j}$. Then by (ii), every graph in $\S$ can cover at most $(1+o(1))
\cdot \alpha(\Delta+K)\cdot n^{-1/\chi}|V_{i,j}|$ vertices of $V_{i,j}$,
so $|\S|\geq (1-o(1))\cdot n^{1/\chi}/(\alpha(\Delta+K))$. This completes the
proof of Theorem \ref{thm:lower}.

\section{Proof of Theorem~\ref{thm:upper}}

Let $r,s,\alpha$ be positive integers with $r>s$. Let $\K:=
\kg^{(\alpha+1)}(r,r-s)$ and $\chi:= \chi(\K)$. Let $G$ be a graph on $n$
vertices with independence number at most $\alpha$, and suppose that the edges
of $G$ are coloured with $r$ colours, which we assume to come from the set
$[r]=\{1,\dotsc,r\}$. Then the vertices of $\K$ correspond naturally to sets of $r-s$
colours. Let $\Delta,\eps>0$ and let $\F$ be a $\Delta$-bounded family of
graphs with such that for every $i\geq 1$, $\F$ contains at least one bipartite
graph with at least $\eps i$ and at most $i$ vertices. In particular, $\F$
contains the graph on a single vertex and with no edges. We will show that
there is a monochromatic $\F$-covering $\S$ of $G$ such that
\[ |\S|\leq Cn^{1/\chi}+ c_{r,r}(G,\F) \text{ and } \col(\S) \leq s,\]
where $C = C(r,s,\alpha,\eps)>0$ is a suitable constant.

We first note that if $s<\alpha r/(\alpha+1)$, then by \eqref{eq:kneserchi}, we
have $\chi=1$. In this case, we can simply cover $G$ by $n$ single vertices,
and we are done. Therefore, we will assume from now on that $s\geq\alpha
r/(\alpha+1)$.

We start by introducing some notation. If $\S$ is a monochromatic $\F$-covering of
$G$ and $X\in V(\K)$ is a set of $r-s$ colours, then we write
$V_{\S,X}\subseteq V(G)$ for the set of all vertices of $G$ that are covered in
$\S$ exclusively by graphs having a colour in $X$, that is,
\[ V_{\S,X} := \{v\in V(G) \colon \text{every $H\in\S$
such that $v\in V(H)$
has a colour in $X$}\}. \]
Note that
$\S\subseteq \S'$ implies $V_{\S',X}\subseteq V_{\S,X}$
for all $X\in V(\K)$: adding more graphs to $\S$ can never increase
one of the sets $V_{\S,X}$. Our goal will be to construct a small
monochromatic $\F$-covering $\S$ such that $V_{\S,X}=\emptyset$ for some $X\in
V(\K)$. Note that in this case, $G$ is completely covered by the graphs in $\S$
that have colours not in $X$, so by removing all graphs with a colour in $X$ from $\S$,
we can obtain a monochromatic $\F$-covering $\S'\subseteq \S$
with $\col(\S')\leq s$.

With this goal in mind, we define a quantity to track the sizes of the sets $|V_{\S,X}|$:
\[ \delta(\S) :=
\sum_{X\in V(\K)}\log{|V_{\S,X}|}, \]
where we can set $\delta(\S)=-\infty$ if $|V_{\S,X}|=0$ holds for some $X\in
V(\K)$. Note that since $|V_{\S,X}|\leq n$, we always have
the bound
$\delta(\S)\leq \binom{r}{r-s}\log n$. Our central
claim is:

\begin{claim}\label{cl:main}
  There is a constant $\beta>0$ such that the following holds.
  If $\S$ is a monochromatic $\F$-covering of $G$ such that
  $|V_{\S,X}|> n^{1/\chi}$ for all $X\in V(\K)$, then $G$ contains a (nonempty)
  collection $\mathcal H=\{H_1,\dotsc,H_t\}$ of monochromatic copies of graphs
  in $\F$
  such that
  \begin{equation}\label{eq:dec}
    \delta(\S)-\delta(\S\cup \mathcal H) \geq \beta tn^{-1/\chi}\log n.
  \end{equation}
\end{claim}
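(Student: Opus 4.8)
The overall strategy is to exhibit pieces that \emph{eliminate} vertices from the sets $V_{\S,X}$ by recolouring them with a colour outside $X$. If $H$ is a monochromatic copy of colour $c$ that we add to $\S$ and $v\in V(H)$, then $v$ leaves every $V_{\S,X}$ with $c\notin X$; since $\log$ has derivative of order $1/|V_{\S,X}|$, removing $w$ vertices from a set $V_{\S,X}$ of current size $N$ decreases $\delta$ by at least roughly $w/N$. The plan is therefore to locate, for a suitable $X\in V(\K)$, a large monochromatic bipartite subgraph of some colour $c\notin X$ with one side contained in $V_{\S,X}$, and then to use the family assumption to cover that side by a member of $\F$.

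First I would isolate the two combinatorial inputs. The hypothesis $s\ge \alpha r/(\alpha+1)$ guarantees that $\K$ has hyperedges, i.e.\ there exist $\alpha+1$ pairwise disjoint label sets $X_1,\dotsc,X_{\alpha+1}$; for these, the sets $V_{\S,X_1},\dotsc,V_{\S,X_{\alpha+1}}$ are pairwise disjoint (a common vertex would force its set of covering colours into $X_i\cap X_j=\emptyset$) and each has size $>n^{1/\chi}$. Because $G$ has independence number at most $\alpha$, the $(\alpha+1)$-partite subgraph spanned by these sets has no independent transversal, hence is dense; pigeonholing over the $r$ colours yields a single colour $c$ spanning many edges either inside one $V_{\S,X_i}$ or between two of them, and the disjointness of the $X_i$ ensures $c$ lies outside at least one of the relevant labels. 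This is the mechanism producing a dense monochromatic subgraph that is \emph{useful} for some $V_{\S,X}$.

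Next I would convert density into an actual covering piece. From a monochromatic subgraph of colour $c$ with $\Omega(N^2)$ edges, or a monochromatic biclique of size $\Omega(\log N)$ between a near-threshold set and a much larger one, the assumption that $\F$ contains a $\Delta$-bounded bipartite member of every size lets me embed such a member greedily so that it covers $\Theta(N)$ vertices of $V_{\S,X}$ in the dense case, and $\Theta(\log n)$ vertices in the biclique case; all covered vertices acquire colour $c\notin X$ and thus leave $V_{\S,X}$. The resulting drop in $\delta$ is a positive constant in the dense case and of order $(\log n)/N$ in the biclique case, which is exactly $\Theta(n^{-1/\chi}\log n)$ when $N$ is of the threshold order $n^{1/\chi}$. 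Collecting $t$ such pieces gives \eqref{eq:dec}.

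The main obstacle — and the point where the chromatic number of $\K$ genuinely enters — is to guarantee the bound for \emph{every} admissible $\S$, in particular when all sets $V_{\S,X}$ are much larger than $n^{1/\chi}$, since the useful colour $c$ from a cross edge may force us to delete from a \emph{large} set, where $w/N$ is too small. Here I would organise the labels $X$ by the geometric scale of $|V_{\S,X}|$, with consecutive scales differing by a factor of order $n^{1/\chi}$, and argue that if no collection $\mathcal H$ satisfied \eqref{eq:dec}, then no hyperedge could have all its labels at large comparable scales (otherwise the density argument above would yield a good piece), so each scale class is independent in $\K$ and the scale assignment is a proper colouring. Since $\chi(\K)=\chi$, at least $\chi$ scales must occur; but sizes confined to $[n^{1/\chi},n]$ admit at most $\chi$ scales of ratio $n^{1/\chi}$, forcing exactly $\chi$ scales and hence a label at the bottom scale $\Theta(n^{1/\chi})$ — precisely where the biclique construction does deliver the per-piece decrease $\beta n^{-1/\chi}\log n$, a contradiction. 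Making this scale-versus-colouring dichotomy precise, controlling the embedding of $\F$-members that cover a full constant fraction, and handling the interaction between internal and cross edges is the technical heart of the argument.
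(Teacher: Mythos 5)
Your proposal assembles most of the right ingredients, and several of them coincide with the paper's proof: the potential function $\delta$ and the observation that removing $w$ vertices from a set of size $N$ gains about $w/N$; the fact that a hyperedge of $\K$ gives $\alpha+1$ pairwise disjoint label sets, hence pairwise disjoint sets $V_{\S,X_i}$; the independence-number double count producing a dense pair $V_{\S,X_i},V_{\S,X_j}$; the majority colour plus $X_i\cap X_j=\emptyset$ forcing that colour outside one of the two labels; and a pigeonhole over logarithmic scales to bring $\chi(\K)$ into play. Two technical points are weaker than they need to be: a \emph{greedy} embedding does not extract a $\Delta$-bounded bipartite member of $\F$ spanning a constant fraction of a merely dense bipartite graph (the paper invokes a regularity-lemma-based embedding lemma for exactly this step), and your two-regime accounting (``constant drop'' versus ``$(\log n)/N$ drop'') omits the intermediate regime where the relevant ratio is between $O(1)$ and $n^{1/\chi}$, although that regime causes no real difficulty.

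The genuine gap is in how you close the argument with the chromatic number. Your scheme is: assume no good $\mathcal H$ exists, deduce that the scale assignment is a proper colouring of $\K$, conclude that at least $\chi$ scales occur, hence some label $X$ sits at the bottom scale $\Theta(n^{1/\chi})$, and then assert that ``the biclique construction delivers the per-piece decrease'' there. That last step is unsupported: the existence of a label at the bottom scale does not produce a monochromatic piece of a colour \emph{outside} $X$ covering many vertices of $V_{\S,X}$. Every mechanism you have for producing such a piece runs through a hyperedge of $\K$ containing $X$ (to get $\alpha+1$ disjoint sets and apply the independence number), and even then the dense pair and the majority colour may force the deletion into a different, much larger member of that hyperedge --- which is precisely the obstacle you yourself identify and never resolve. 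The fix is both simpler and different from what you propose: with sizes in $(n^{1/\chi},n]$ and scale ratio slightly above $n^{1/\chi}$, the map $X\mapsto\floor{\log_b|V_{\S,X}|}$ takes at most $\chi-1$ values, so it \emph{cannot} be a proper colouring of $\K$, and therefore some hyperedge has all $\alpha+1$ labels within a ratio of $n^{1/\chi}$ of one another. On such a hyperedge the density argument can be iterated: one repeatedly carves removable sets of size $\Omega(|V_{\S,X_{\alpha+1}}|)$ out of some $V_{\S,X_i}$ until it shrinks below $|V_{\S,X_{\alpha+1}}|/2$, using $t=O(|V_{\S,X_i}|/|V_{\S,X_{\alpha+1}}|)$ pieces for a total gain of $\log(2|V_{\S,X_i}|/|V_{\S,X_{\alpha+1}}|)$; since the comparable-scale condition bounds this ratio by $n^{1/\chi}$ \emph{regardless of which $i$ the density argument happens to select}, the per-piece gain is at least $\Omega(n^{-1/\chi}\log n)$. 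Without that uniform ratio control over the whole hyperedge, your contradiction does not go through.
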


We postpone the proof of this claim and first show how it can serve to imply
the theorem. 
We construct a monochromatic $\F$-covering step by step,
starting with some monochromatic $\F$-covering $\S_0$ of size $c_{r,r}(G,\F)$
(which exists by definition). Then as long as $|V_{\S_i,X}|> n^{1/\chi}$ for
all $X\in V(\K)$, we construct $\S_{i+1}$ from $\S_i$ by setting
$\S_{i+1}=\S_i\cup \mathcal H$ for a collection $\mathcal H$ as given by Claim
\ref{cl:main}. Note that since $\delta(\S_0) \leq \binom{r}{r-s}\log n$, and
since $\delta(\S) \leq 0$ implies that $|V_{\S,X}|\leq 1\leq n^{1/\chi}$ for
some $X\in V(\K)$, it follows from \eqref{eq:dec} that this
process must end after adding at most
$\binom{r}{r-s}n^{1/\chi}/\beta$ graphs to $\S_0$. In other words, we end up
with a monochromatic $\F$-covering $\S^*$ of size $|\S^*|\leq
c_{r,r}(G,\F) + \binom{r}{r-s}n^{1/\chi}/\beta$ such that $|V_{\S^*,X}| \leq
n^{1/\chi}$ holds for at least one $X\in V(\K)$. From this we obtain another
monochromatic $\F$-covering
$\S$ by adding to $\S^*$ at most $n^{1/\chi}$ single-vertex
graphs covering the vertices in $V_{\S^*,X}$. Note that then $V_{\S,X} =
\emptyset$ and $|\S|\leq c_{r,r}(G,\F) + \binom{r}{r-s}n^{1/\chi}/\beta +
n^{1/\chi}$. As mentioned above, we can then find a monochromatic $\F$-covering
$\S'\subseteq \S$ with $\col(\S')\leq s$, completing the proof of the theorem.

\subsection{Proof of Claim \ref{cl:main}}

It remains to give the proof of Claim \ref{cl:main}. The proof will use the
following lemma, whose proof we omit (it is a standard application of
Szemerédi's regularity lemma, see for example \cite[Theorem 2.1]{KS1996}).

\begin{lemma}\label{lemma:embedding}
  For every $\eps>0$ and $\Delta>0$ there is a constant $\delta>0$ such that
  the following holds for all sufficiently large $n$.
  If $G = (A,B,E)$ is a bipartite graph with $|A|=|B|=n$ and $|E|\geq \eps n^2$,
  then it contains as a subgraph every bipartite graph with maximum degree at most $\Delta$
  and at most $\delta n$ vertices.
\end{lemma}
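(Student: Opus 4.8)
The plan is to use Szemerédi's regularity lemma to locate a single regular pair of positive density inside $G$, and then to embed the target graph greedily into that pair. Fix a bipartition $V(H) = X \cupdot Y$ of the graph $H$ to be embedded; since every edge of $H$ runs between $X$ and $Y$, it suffices to find an embedding sending $X$ into $A$ and $Y$ into $B$. Write $h := |V(H)| \leq \delta n$ and recall that $H$ has maximum degree at most $\Delta$. The constant $\delta$ will be chosen only at the very end, after the regularity parameter has been fixed.

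First I would apply the regularity lemma, in the form that refines a given initial partition, to $G$ with the initial partition $\{A,B\}$ and a small regularity parameter $\gamma$ (to be specified), so that each resulting cluster lies entirely inside $A$ or entirely inside $B$, and all clusters have size $m = (1\pm o(1))n/k$ for some $k \le k_0(\gamma)$. A standard cleaning step then discards all edges lying in irregular pairs and all edges lying in pairs of density below $\eps/2$; since $G$ is bipartite there are no edges inside clusters, and the two discarded classes account for fewer than $(\gamma + \eps/2)n^2$ edges in total. Choosing $\gamma < \eps/2$, the number of surviving edges is at least $(\eps/2 - \gamma)n^2 > 0$, so at least one pair $(A',B')$ of clusters is $\gamma$-regular and has density $d \ge \eps/2$. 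Both $A'$ and $B'$ have size $m \ge n/(2k_0)$, which is linear in $n$.

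Next I would embed $H$ into $(A',B')$ one vertex at a time, maintaining for every not-yet-embedded vertex $v$ a \emph{candidate set} $C(v)$: a subset of $A'$ if $v\in X$ and of $B'$ if $v\in Y$, consisting of the images still available to $v$. Initially $C(v)=A'$ or $C(v)=B'$. Whenever a vertex $u$ is embedded to a point $p$, I intersect $C(v)$ with $N_G(p)$ for every unembedded neighbour $v$ of $u$, and delete $p$ from all candidate sets. The key regularity estimate is that in a $\gamma$-regular pair of density $d$, for any $S\subseteq A'$ with $|S|\ge \gamma m$ all but at most $\gamma m$ vertices of $B'$ have at least $(d-\gamma)|S|$ neighbours in $S$ (and symmetrically). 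Consequently, as long as a candidate set has size at least $\gamma m$, intersecting it with one further neighbourhood shrinks it by a factor no worse than $d-\gamma$, for all but at most $\gamma m$ of the possible images of the embedded vertex. Since each vertex of $H$ has at most $\Delta$ neighbours, its candidate set is intersected at most $\Delta$ times, so at the moment we embed it the set still has size at least roughly $(d-\gamma)^{\Delta} m$.

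To guarantee that the greedy step always succeeds, two quantities must stay below $(d-\gamma)^{\Delta}m$: the regularity threshold $\gamma m$ (so that the estimate above keeps applying), and the number $h \le \delta n \le 2k_0 \delta\, m$ of already-used vertices in that cluster (so that a free image remains). Both are arranged by fixing the constants in the correct order: first choose $\gamma$ small enough that $\gamma < (d-\gamma)^{\Delta}/2$, which is possible because $d\ge\eps/2$ depends only on $\eps$, and this choice fixes $k_0$; then choose $\delta$ small enough that $2k_0\delta < (d-\gamma)^{\Delta}/2$. With these choices every candidate set, at the instant its vertex is embedded, is strictly larger than the number of points already used in its cluster, so a valid image always exists and the embedding goes through. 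I expect the main obstacle to be exactly this bookkeeping: controlling the candidate sets under up to $\Delta$ successive intersections while ensuring they never fall below the regularity threshold $\gamma m$, and pinning down the dependency chain $\gamma=\gamma(\eps,\Delta)$, then $k_0=k_0(\gamma)$, then $\delta=\delta(\eps,\Delta)$ consistently.
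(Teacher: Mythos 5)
Your proposal is correct and follows essentially the same route as the paper, which omits the proof precisely because it is this ``standard application of Szemerédi's regularity lemma'' (the Key Lemma, Theorem 2.1 in Komlós--Simonovits): refine $\{A,B\}$, clean up irregular and sparse pairs to find one dense $\gamma$-regular pair, and embed greedily via candidate sets. The only nit is in the final bookkeeping: when choosing the image of a vertex you must avoid the at most $\gamma m$ bad vertices for \emph{each} of its at most $\Delta$ unembedded neighbours, so the condition should read $\Delta\gamma m + h < (d-\gamma)^{\Delta}m$ rather than your stated $\gamma m < (d-\gamma)^{\Delta}m/2$ --- harmless, since taking $\gamma < (\eps/2-\gamma)^{\Delta}/(4\Delta)$ absorbs the extra factor.
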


In the following, let $\S$ be a monochromatic $\F$-covering of $G$ such that $|V_{\S,X}|>
n^{1/\chi}$ for all $X\in V(\K)$. We first show:

\begin{claim}
There exists a hyperedge
$\E=\{X_1,\dotsc,X_{\alpha+1}\}$ of $\K$ such that
\begin{equation}\label{eq:ratios} n^{-1/\chi}\leq
\frac{|V_{\S,X_i}|}{|V_{\S,X_j}|}\leq n^{1/\chi} \quad \text{for all $i,j\in
[\alpha+1]$.} \end{equation}
\end{claim}

\begin{proof}
  Fix any $c>1$ and let $b\in (n^{1/\chi},c n^{1/\chi})$ be such that
  $b\leq |V_{\S,X}|$ holds for
  all $X\in V(\K)$. This is possible because we assume that
  $|V_{\S,X}|>n^{1/\chi}$ for all $X\in V(\K)$. Then, because $b \leq
  |V_{\S,X}| \leq n$, the map $X\mapsto \floor{\log_b{|V_{\S,X}|}}$ assigns
  each vertex of $\K$ a number between $1$ and $\floor{\log_b n} \leq \chi-1$.
  Hence, by definition of the chromatic number, there is a hyperedge $\E =
  \{X_1,\dotsc,X_{\alpha+1}\}$ in which all vertices receive the same number.
  Then for all $i,j\in [\alpha+1]$, we have
  \[ -1< \log_b |V_{\S,X_i}| - \log_b |V_{\S,X_j}| < 1, \] so $n^{-1/\chi}/c <
  |V_{\S,X_i}|/|V_{\S,X_j}| < cn^{1/\chi}$. Since $c$ can be arbitrarily close
  to $1$, and as $\K$ is finite, this implies the claim.
\end{proof}
 
Let now $\E=\{X_1,\dotsc,X_{\alpha+1}\}$ be a hyperedge of $\K$ satisfying
\eqref{eq:ratios}. We will assume the elements of $\E$ are ordered so that \[
  |V_{\S,X_1}|\geq |V_{\S,X_2}|\geq \dotsb \geq |V_{\S,X_{\alpha+1}}|. \]

\begin{definition}[Removable set]
  Let us say that a subset $W\subseteq V_{\S,X_i}$ is \emph{removable} if $G$
  contains a monochromatic copy $H$ of some graph in $\F$ such that
  (i) the colour of $H$ is in $[r]\setminus
  X_i$ and (ii) $W\subseteq V(H)$.
\end{definition}

The idea behind this definition is that if $W\subseteq V_{\S,X_i}$ is
removable, then by adding the graph $H$ to $\S$, we can decrease the size of
$|V_{\S,X_i}|$ by at least $|W|$: indeed, recalling the definition of
$V_{\S,X_i}$, we see that $V_{\S\cup \{H\},X_i} \subseteq V_{\S,X_i}\setminus
W$.

\begin{claim}\label{cl:aux2}
  There is a constant $C>0$ and some $i\in [\alpha+1]$ such that the following
  holds: There exist
  $t\leq C|V_{\S,X_i}|/|V_{\S,X_{\alpha+1}}|$
  disjoint removable sets
  $W_1,\dotsc,W_t\subseteq V_{\S,X_i}$ 
  covering all except for at most
  $|V_{\S,X_{\alpha+1}}|/2$ vertices in $|V_{\S,X_i}|$.
\end{claim}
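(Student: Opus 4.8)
The plan is to combine three ingredients: the disjointness of the sets $V_{\S,X_i}$, the bounded independence number of $G$ (to locate a dense bipartite pair), and Lemma~\ref{lemma:embedding} (to tile that pair with monochromatic pieces from $\F$).

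First I would record that the sets $V_{\S,X_1},\dots,V_{\S,X_{\alpha+1}}$ are pairwise \emph{disjoint}. Indeed, if some vertex $v$ lay in both $V_{\S,X_i}$ and $V_{\S,X_j}$ with $i\neq j$, then every graph of $\S$ covering $v$ would have its single colour simultaneously in $X_i$ and in $X_j$; since $\E$ is a hyperedge of $\K$ the labels $X_i,X_j$ are disjoint, and since $\S$ covers $v$ by at least one coloured graph, this is impossible. So we have $\alpha+1$ pairwise disjoint nonempty sets inside a graph of independence number at most $\alpha$. Choosing one vertex from each set never yields an independent set, so a union bound over the $\binom{\alpha+1}{2}$ pairs produces a pair $(V_{\S,X_a},V_{\S,X_b})$ whose bipartite graph has edge density at least $1/\binom{\alpha+1}{2}$; by the fixed ordering I may take $a<b$, so the partner set $V_{\S,X_a}$ is at least as large as $V_{\S,X_b}$.

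Next I would colour-sort these edges: with only $r$ colours, a single colour $c$ carries a $1/\big(r\binom{\alpha+1}{2}\big)$ fraction of the pair's edges. The point is to reduce the \emph{smaller} set $V_{\S,X_b}$, because only then does the partner side have enough capacity for vertex-disjoint pieces: a $\Delta$-bounded bipartite piece has its two sides within a bounded factor of each other, so pieces placed between $V_{\S,X_b}$ and the larger $V_{\S,X_a}$ can cover almost all of $V_{\S,X_b}$, whereas covering the larger set from the smaller stalls after $\approx|V_{\S,X_a}|$ vertices. For $V_{\S,X_b}$ to be a legal target I need the heavy colour to avoid its label, i.e.\ $c\notin X_b$. \textbf{This colour/capacity coupling is the main obstacle}: it remains to rule out the case where the heavy colour lies in $X_b$, which would force reducing the larger set. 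I would handle this by not committing to the globally densest pair but averaging over all pairs and colours under the ``reduce the smaller endpoint'' orientation; using that the labels $X_1,\dots,X_{\alpha+1}$ are pairwise disjoint, a colour forbidden for one set is automatically allowed for every other, and this redistribution should let me select a pair together with an allowed colour $c\notin X_b$ that still carries a constant fraction of a dense bipartite graph between $V_{\S,X_b}$ and a set at least as large.

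Finally, given such a colour $c\notin X_b$ spanning a constant-density bipartite graph between $V_{\S,X_b}$ and a larger partner, I would iterate Lemma~\ref{lemma:embedding}: at each step embed a \emph{bipartite} member of $\F$ of size $\Theta(|V_{\S,X_{\alpha+1}}|)$ into a balanced dense subpair, delete its vertices, and note the remaining bipartite graph stays dense while the partner side never runs out (it started at least as large as $V_{\S,X_b}$). The $V_{\S,X_b}$-part of each embedded piece is a removable set, and the pieces are disjoint by construction; I continue until fewer than $|V_{\S,X_{\alpha+1}}|/2$ vertices of $V_{\S,X_b}$ remain. Since each piece covers $\Theta(|V_{\S,X_{\alpha+1}}|)$ of them, the number of pieces is $O\!\big(|V_{\S,X_b}|/|V_{\S,X_{\alpha+1}}|\big)$, giving the claimed bound on $t$ with $i:=b$. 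The only routine points left are the density-robustness of the tiling and the existence of suitably sized bipartite members of $\F$, both immediate from the hypotheses and Lemma~\ref{lemma:embedding}.
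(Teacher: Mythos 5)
Your toolkit (pairwise disjointness of the sets $V_{\S,X_i}$, the independence-number double count to locate a dense pair, passing to a majority colour, and Lemma~\ref{lemma:embedding}) is the same as the paper's, but there is a genuine gap exactly at the point you flag as ``the main obstacle'': you insist that the removable sets live in the \emph{smaller} set $V_{\S,X_b}$ of the dense pair, which forces you to find a heavy colour $c\notin X_b$. Your proposed averaging fix does not close this. Already for $\alpha=1$ there is only one pair to average over, and nothing prevents \emph{every} edge between $V_{\S,X_1}$ and $V_{\S,X_2}$ from carrying a colour belonging to $X_2$; then no subset of the smaller set is removable via this pair, and there are no other pairs or orientations to redistribute to. The resolution is that the obstacle is a phantom: the claim quantifies over $i$, and the bound $t\le C|V_{\S,X_i}|/|V_{\S,X_{\alpha+1}}|$ scales with $|V_{\S,X_i}|$, so reducing the \emph{larger} set is perfectly acceptable. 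Since $X_a\cap X_b=\emptyset$, the majority colour misses at least one of the two labels, so at least one side of the embedded piece is always removable; one iterates, shrinking whichever of the $\alpha+1$ sets admits a removable chunk at that step, and stops as soon as \emph{some} set is covered up to $|V_{\S,X_{\alpha+1}}|/2$ — that set is the $i$ of the claim. Note also that only the sets $W_j$ must be pairwise disjoint, not the host graphs $H_j$, so your worry about the partner side ``running out'' does not arise: partner vertices may be reused.

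A second, more technical gap: you fix one dense pair and one colour once and for all and then tile greedily, asserting that ``the remaining bipartite graph stays dense.'' This is not guaranteed — the colour-$c$ edges could be concentrated on precisely the portion of $V_{\S,X_b}$ that your first few pieces consume. The paper sidesteps both issues by re-running the entire selection at every step on fresh, \emph{equal-sized} subsets $V_1,\dotsc,V_{\alpha+1}$ (each of size $|V_{\S,X_{\alpha+1}}|/2$, drawn from the not-yet-covered portions of the respective $V_{\S,X_j}$): this re-establishes a dense balanced pair each time (balance is also what Lemma~\ref{lemma:embedding} requires, since it is stated for $|A|=|B|$), re-chooses the majority colour, and produces a removable set of size $\Omega(|V_{\S,X_{\alpha+1}}|)$ in one of the $\alpha+1$ sets per step, which yields the stated bound on $t$ for whichever index finishes first.
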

\begin{proof}
  Observe first that it is enough to show the following statement:
  for every choice of subsets $V_1,\dotsc,V_{\alpha+1}$ where
  $V_i\subseteq V_{\S,X_i}$ and where each $V_i$ has size
  $|V_{\S,X_{\alpha+1}}|/2$, there is some $i\in [\alpha+1]$ and a subset
  $W\subseteq V_i$ of size at least $|V_{\S,X_{\alpha+1}}|/C$ that is
  removable. Indeed, we can then repeatedly apply this statement until we have
  covered all but $|V_{\S,X_{\alpha+1}}|/2$ vertices in at least one set
  $V_{\S,X_i}$, and it is clear that this requires at most  $C|V_{\S,X_i}|/
  |V_{\S,X_{\alpha+1}}|$ subsets of $V_{\S,X_i}$.
  So we will now
  prove this other statement instead.

  Fix sets $V_1,\dotsc,V_{\alpha+1}$ as above. For brevity, write $\eta :=
  |V_{\S,X_{\alpha+1}}|/2 = |V_1| = \dotsb = |V_{\alpha+1}|$.
  From the fact that $G$ has independence number at most $\alpha$ it follows
  that there exist distinct $i,j\in [\alpha+1]$ such that $e_G(V_i,V_j)\geq
  \eta^2/(\alpha+1)^2$. This can be seen by simple double counting: for every
  choice of $\alpha+1$ vertices $v_i\in V_i$ for $i\in [\alpha+1]$, there
  must be two vertices that are connected by an edge. Going over all ways to
  choose such vertices, we thus obtain $\eta^{\alpha+1}$ edges, where every
  edge is obtained at most $\eta^{\alpha-1}$ times; so there must be
  $\eta^2$ edges going between the sets $V_1,\dotsc,V_{\alpha+1}$. In
  particular, for some $i\neq j$, we have $e_G(V_i,V_j)\geq
  \eta^2/(\alpha+1)^2$.

  Suppose now that $e_G(V_i,V_j)\geq \eta^2/(\alpha+1)^2$.
  Let $k\in [r]$ denote the majority colour of the edges
  in $G[V_i,V_j]$ and write $G_k[V_i,V_j]$ for the subgraph consisting only of
  the edges having colour $k$. Then it is clear that $G_k[V_i,V_j]$ has at
  least $\eta^2/(r(\alpha+1)^2)$ edges.

  Recall that we assume that $\F$ is $\Delta$-bounded and that there is some
  $\eps>0$ such that for every $n'\geq 1$, the family $\F$ contains at least
  one bipartite subgraph $F\in \F$ with $\eps n' \leq |V(F)|\leq n'$.

  Applying Lemma \ref{lemma:embedding} to $G_k[V_i,V_j]$ (which is
  possible for large $n$ since $|V_i|=|V_j|=\eta>n^{1/\chi}/2$) , we obtain that
  $G_k[V_i,V_j]$ contains as a subgraph every $\Delta$-bounded bipartite graph on at
  most $2(\Delta+1) \eta/(C\eps)$ vertices, for some sufficiently large constant $C>0$.
  In particular, $G_k[V_i,V_j]$ contains a copy of a graph $F\in \F$ with at
  least $2(\Delta+1) \eta/C$ vertices. In fact, since $F$ has maximum degree at most
  $\Delta$, it can be embedded in such a way that is uses at least $2\eta/C$
  vertices of $V_i$ and at least $2\eta/C$ vertices of $V_j$ (for every $\Delta$ non-isolated
  vertices in $V_i$ we must embed at least one vertex in $V_j$, whereas the isolated vertices
  can be embedded arbitrarily). Denote this copy by $H$ and note that as a subgraph of
  $G_k[V_i,V_j]$ it is clearly monochromatic in colour $k$.
  Since the sets $X_i$ and $X_j$ are disjoint (they are
  part of a hyperedge in $\K$), they cannot both contain $k$, and so at least one
  of the sets $V(H)\cap V_i$ or $V(H)\cap V_j$ is removable, and both these sets
  have size $2\eta/C =|V_{\S,X_{\alpha+1}}|/C$.
\end{proof}

Let $W_1,\dotsc,W_t\subseteq V_{\S,X_i}$ be disjoint removable sets  as in
Claim \ref{cl:aux2} and let $\mathcal H = \{H_1,\dotsc H_t\}$ be the
corresponding collection of subgraphs, so that $H_j$ is a monochromatic copy of a
graph in $\F$ that covers $W_j$ and uses a colour outside $X_i$.
By Claim \ref{cl:aux2} and the definition of \emph{removable}, we have
$|V_{\S\cup \mathcal H,X_i}| \leq |V_{\S,X_{\alpha+1}}|/2
< |V_{\S,X_i}|$. This implies 
immediately that the collection $\mathcal H$ is nonempty. It also implies that
\[ \begin{split}
  \delta(\S\cup \mathcal H)
  &= \sum_{j\in [\alpha+1]}
  \log{|V_{\S\cup \mathcal H,X_j}|}\\
  &\leq \sum_{j\in [\alpha+1]\setminus \{i\}}
  \log{|V_{\S,X_j}|}+\log{|V_{\S\cup \mathcal H,X_i}|}\\
  &\leq \delta(\S) -\log{|V_{\S,X_i}|}   +\log (|V_{\S,X_{\alpha+1}}|/2)\\
  &= \delta(\S)- \log (2|V_{\S,X_i}|/|V_{\S,X_{\alpha+1}}|),
\end{split}
\]
and so
\[ \delta(\S)-\delta(\S\cup \mathcal H)\geq \log (2|V_{\S,X_i}|/|V_{\S,X_{\alpha+1}}|). \]
At the same time, 
using $1\leq t\leq C|V_{\S,X_i}|/|V_{\S,X_{\alpha+1}}|$ and
$|V_{\S,X_i}|/|V_{\S,X_{\alpha+1}}|\leq n^{1/\chi}$,
we get
\[ 
\frac{\log (2|V_{\S,X_i}|/|V_{\S,X_{\alpha+1}}|)}{t}
\geq \frac{\log
(2|V_{\S,X_i}|/|V_{\S,X_{\alpha+1}}|)}{C|V_{\S,X_i}|/|V_{\S,X_{\alpha+1}}|}
\geq \frac{\log
(2n^{1/\chi})}{Cn^{1/\chi}} \geq
\frac{n^{-1/\chi}\log n}{C\chi},\]
so 
\[ \delta(\S)-\delta(\S\cup \mathcal H)\geq
\frac{tn^{-1/\chi}\log n}{C\chi},\]
completing the proof of Claim \ref{cl:main}.

\end{document}